\documentclass[11pt]{amsart}
\usepackage{tabularx,booktabs,tikz}
\usepackage{caption}
\usepackage{amsmath}
\usepackage{amsfonts}

\usepackage{amscd}
\usepackage{amsthm}
\usepackage{amssymb} 
\usepackage{latexsym}
\usepackage{eufrak}
\usepackage{euscript}
\usepackage{epsfig}
\usepackage{graphics}
\usepackage{array}
\usepackage{enumerate}
\usepackage{dsfont}
\usepackage{color}
\usepackage{wasysym}
\usepackage{hyperref}
\usepackage{natbib}
\usepackage{pdfsync}

\newcommand{\Hmm}[1]{\leavevmode{\marginpar{\tiny%
$\hbox to 0mm{\hspace*{-0.5mm}$\leftarrow$\hss}%
\vcenter{\vrule depth 0.1mm height 0.1mm width \the\marginparwidth}%
\hbox to
0mm{\hss$\rightarrow$\hspace*{-0.5mm}}$\\\relax\raggedright #1}}}

\newtheorem{theorem}{Theorem}[section]
\newtheorem{lemma}[theorem]{Lemma}

\newtheorem{definition}[theorem]{Definition}

\newtheorem{proposition}[theorem]{Proposition}

\usepackage[margin=1.15in]{geometry}
\usepackage{amsmath,amssymb,amsthm,mathtools}
\numberwithin{equation}{section}

\theoremstyle{definition}

\title[Stable Liouville solutions on graphs]
{Nonexistence of Stable Solutions to the Liouville Equation on Graphs with Polynomial Volume Growth}

\author{Weiqi Guan}
\address{School of Mathematical Sciences, Fudan University, Shanghai 200433, China}
\email{wqguan24@m.fudan.edu.cn}

\keywords{Liouville equation, stable solutions, graph Laplacian,
polynomial volume growth, nonexistence}

\subjclass[2020]{Primary 35R02; Secondary 35B35, 35B53, 35A01}

\begin{document}

\begin{abstract}
We study stable solutions of the discrete Liouville equation
\[
-\Delta u=e^u
\]
on infinite, locally finite graphs with polynomial volume growth.
We prove that no stable solution exists whenever the order $d$
of polynomial volume growth satisfies $d<10$.
\end{abstract}

\maketitle

\bibliographystyle{abbrv}

\section{Introduction}

The Liouville equation
\begin{equation}\label{eq:liouville}
    -\Delta u=e^u
\end{equation}
is a classical semilinear elliptic equation whose entire solutions have been
studied extensively in the continuous setting \cite{chen1991classification,dancer2009classification}. A central question is whether
stable solutions can exist on the whole space. This question is studied for Lane--Emden equation \cite{farina2007classification} and for general nonlinearty \cite{dupaigne2010stable} on the Euclidean space. For the Liouville equation on the Euclidean space,
Farina proved that no stable classical solution exists in $\mathbb{R}^N$ when
$2\leq N\leq 9$, and remarked that this dimensional threshold is sharp: radial stable
solutions exist when $N\geq 10$ \cite{farina2007stable}. This result makes the number
$10$ a natural threshold to investigate in discrete settings as well.

Equations on graphs are studied extensively recently \cite{grigor2016kazdan,huang2020existence}. Though, the classification of semilinear elliptic equation are studied recently \cite{gu2023semi,hao2022sharp}, the classification of stable solution to semilinear elliptic on graphs is less studied. On graphs the lack of a differential chain rule makes it difficult to transfer
arguments from Euclidean space directly. For power nonlinearities, however,
progress has been made for the discrete Lane--Emden equation
$-\Delta u=|u|^{p-1}u$. In particular, algebraic inequalities can replace key
chain-rule estimates and, together with stability and cutoff arguments, yield
nonexistence results on graphs with polynomial volume growth in ranges related
to the Joseph--Lundgren exponent \cite{guan2026nonexistence}. These estimates are adapted
to power functions and do not directly cover the exponential nonlinearity in
\eqref{eq:liouville}.

The obstruction is not merely a missing technical detail. For example, there
is no constant $C>0$ such that
\[
    (a-b)^2(e^a+e^b)\leq C(a-b)(e^a-e^b)
    \qquad\text{for all }a,b\in\mathbb{R}.
\]
Consequently, the algebraic inequality used for the discrete power equation
cannot simply be transferred to the exponential equation. Moreover, the
continuous proof of Farina relies on differential identities for $e^{\alpha u}$
and the ordinary chain rule, neither of which is available in the same form on
a graph.

In this paper, we overcome this difficulty by introducing
$v=e^{\alpha u}$ for a parameter $\alpha>0$. The convexity of the exponential
function gives the one-sided inequality
\[
    -\Delta v\leq \alpha v^{1+1/\alpha},
\]
while stability of $u$ yields a inequality for $v$. We can therefore apply suitable power-type algebraic
inequalities to $v$ and deduce suitable estimate.

Our main result is the following graph analogue of Farina's nonexistence
result.

\begin{theorem}\label{thm:main}
Let $G$ be an infinite, locally finite, connected, simple, unweighted graph
with polynomial volume growth of order $d$ in the sense of Definition~\ref{def:volume-growth}.
If $d<10$, and $u$ is a stable solution to \eqref{eq:liouville}. Then $u\equiv 0$.
\end{theorem}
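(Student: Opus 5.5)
The plan is to argue by contradiction. Since $e^u>0$, any solution makes $\sum_x e^{(2\alpha+1)u(x)}$ strictly positive. I will show that for a stable solution this sum over balls $B_R$ is $O(R^{d-2(2\alpha+1)})$ for a suitable $\alpha<2$, and hence is $0$. I use stability in the form
\[
\sum_x e^{u}\varphi^2\le \frac12\sum_{x\sim y}(\varphi(x)-\varphi(y))^2
\]
for finitely supported $\varphi$, and I will fix the precise constant convention against Definition~\ref{def:volume-growth} and the definition of stability.

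Fix a cutoff $\eta$ with $\eta=1$ on $B_R$, $\operatorname{supp}\eta\subset B_{2R}$ and $|\eta(x)-\eta(y)|\le C/R$ for $x\sim y$; later I replace $\eta$ by $\eta^m$ with $m$ large. There are two test steps.

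\emph{Equation step.} Multiply $-\Delta u=e^u$ by $e^{2\alpha u}\eta^2$ and sum by parts. This gives
\[
\frac12\sum_{x\sim y}(u(x)-u(y))\bigl(e^{2\alpha u(x)}\eta^2(x)-e^{2\alpha u(y)}\eta^2(y)\bigr)=\sum_x e^{(2\alpha+1)u}\eta^2 .
\]

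\emph{Stability step.} Insert $\varphi=e^{\alpha u}\eta$ into the stability inequality.

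The key algebraic replacement for the chain rule is the inequality
\[
(e^{\alpha a}-e^{\alpha b})^2\le \frac{\alpha}{2}\,(a-b)\,(e^{2\alpha a}-e^{2\alpha b}),\qquad a,b\in\mathbb R .
\]
To check it, set $z=\alpha(a-b)>0$ and divide by $e^{2\alpha b}$. The inequality becomes $(e^z-1)^2\le \frac z2(e^{2z}-1)$, which is equivalent to $\tanh(z/2)\le z/2$. This is exactly the discrete form of Farina's identity $\int|\nabla e^{\alpha u}|^2=\frac{\alpha}{2}\int\nabla u\cdot\nabla e^{2\alpha u}$, with equality replaced by the correct one-sided bound.

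Combining the two steps heuristically (ignoring $\eta$) gives
\[
\sum e^{(2\alpha+1)u}\eta^2\le \frac{\alpha}{2}\sum e^{(2\alpha+1)u}\eta^2+\text{(cutoff errors)}.
\]
This yields a gain precisely when $\alpha<2$.

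The main obstacle is the cutoff bookkeeping, because on a graph $\eta$ is evaluated at two different vertices. I would handle it as follows. Expand $(e^{\alpha u(x)}\eta(x)-e^{\alpha u(y)}\eta(y))^2$ with $\eta(x)^2$ and $\eta(y)^2$ symmetrized, and write $e^{2\alpha u(x)}\eta^2(x)-e^{2\alpha u(y)}\eta^2(y)$ as a main term plus $e^{2\alpha u}\cdot(\eta^2(x)-\eta^2(y))$ terms. I would then prove a perturbed version of the inequality of the form
\[
(e^{\alpha a}s-e^{\alpha b}t)^2\le(1+\varepsilon)\frac{\alpha}{2}(a-b)(e^{2\alpha a}s^2-e^{2\alpha b}t^2)+C_\varepsilon(e^{2\alpha a}+e^{2\alpha b})(s-t)^2 .
\]
The delicate cross term is $(a-b)e^{2\alpha a}(s^2-t^2)$, which contains the unbounded factor $a-b$. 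I would control it by Young's inequality against the good term $(a-b)(e^{2\alpha a}-e^{2\alpha b})\ge0$. Where this fails, namely when $|a-b|$ is large, I would use that $(a-b)(e^{2\alpha a}-e^{2\alpha b})$ then dominates $e^{2\alpha\max(a,b)}|a-b|$. Choosing $\varepsilon$ small so that $(1+\varepsilon)\alpha/2<1$ gives
\[
\sum_x e^{(2\alpha+1)u}\eta^{2m}\le C\sum_{x\sim y}(e^{2\alpha u(x)}+e^{2\alpha u(y)})\,\eta^{2m-2}\,|\nabla\eta|^2 .
\]

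Finally, apply Hölder with exponents $\frac{2\alpha+1}{2\alpha}$ and $2\alpha+1$, using $m$ large so that $\eta^{2m-2}\le(\eta^{2m})^{2\alpha/(2\alpha+1)}$; local finiteness with bounded degree on balls handles the edge-to-vertex sum. This gives
\[
\sum_{B_R}e^{(2\alpha+1)u}\le C\sum_{x\in B_{2R}}R^{-2(2\alpha+1)}\le C R^{d-2(2\alpha+1)} .
\]
Since $d<10$, I can pick $\alpha<2$ with $d<4\alpha+2$. Letting $R\to\infty$ then forces $\sum_G e^{(2\alpha+1)u}=0$, which contradicts $e^u>0$. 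So no stable solution exists, and the conclusion of Theorem~\ref{thm:main} holds vacuously.
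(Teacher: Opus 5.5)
Your route differs from the paper's and can be completed. However, your justification of the key perturbed inequality is wrong for large $|a-b|$, and the final cutoff step needs one more ingredient.

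\textbf{How the two routes compare.} The paper never lets the difference $u(x)-u(y)$ appear. It passes to $v=e^{\alpha u}$ and uses convexity to get the Lane--Emden subsolution inequality $-\Delta v\le\alpha v^{1+1/\alpha}$. It then runs a power-type argument with test functions $v^\gamma\eta^2$ and $v^{(\gamma+1)/2}\eta$. Every cross term there is homogeneous in $v$ and is closed by \eqref{eq:alg2}. The price is the convexity loss and an auxiliary exponent: the admissible range is $\alpha(\gamma+1)<4\gamma/(\gamma+1)$, so the threshold $10$ is reached only as $\gamma\to\infty$.

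You test the equation itself with $e^{2\alpha u}\eta^2$ and replace the chain rule by the sharp inequality $(e^{\alpha a}-e^{\alpha b})^2\le\frac{\alpha}{2}(a-b)(e^{2\alpha a}-e^{2\alpha b})$; your $\tanh$ check is correct. This gives Farina's condition $\alpha<2$ directly, with one parameter and the same final exponent $2(2\alpha+1)$. Your $e^{2\alpha u}$ plays the role of the paper's $v^{\gamma+1}$. The price is that the unbounded factor $a-b=u(x)-u(y)$ enters the cutoff cross terms, which is exactly what the paper's change of variables avoids.

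\textbf{The perturbed inequality.} The inequality you state is true for all $a,b\in\mathbb{R}$ and $s,t\ge0$, but not for the reason you give. For large $|a-b|$ the cross term is not small compared with the good term. Take $s=0<t$ and $a-b=L$. The cross term $-\frac L2(e^{2\alpha a}+e^{2\alpha b})t^2$ has the size of the good term $\frac L2(e^{2\alpha a}-e^{2\alpha b})t^2$. Meanwhile $(e^{2\alpha a}+e^{2\alpha b})(s-t)^2$ is smaller by a factor $L$, and by homogeneity $t$ may be of order $1/R$. So the cross term cannot be bounded by $\varepsilon$ times the good term plus $C_\varepsilon(e^{2\alpha a}+e^{2\alpha b})(s-t)^2$. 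Knowing that the good term dominates $e^{2\alpha\max(a,b)}|a-b|$ only bounds the cross term by a constant of order one times the good term, and that destroys the factor $(1+\varepsilon)\alpha/2$.

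What works is to compare the left side with the whole combination $(a-b)(e^{2\alpha a}s^2-e^{2\alpha b}t^2)$. By symmetry take $a\ge b$ and set $z=\alpha(a-b)$. There are three cases.
\begin{enumerate}
\item[(i)] If $z\le2$, your weighted Young argument works, since $z\coth z$ is bounded there.
\item[(ii)] If $z>2$ and $e^{\alpha a}s\ge e^{\alpha b}t$, the left side is at most $(e^{\alpha a}s-e^{\alpha b}t)(e^{\alpha a}s+e^{\alpha b}t)\le\frac z2(e^{2\alpha a}s^2-e^{2\alpha b}t^2)$.
\item[(iii)] If $z>0$ and $e^{\alpha a}s<e^{\alpha b}t$, then $t\le(t-s)/(1-e^{-z})$. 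The left side is at most $e^{2\alpha b}(t-s)^2$. The negative main term satisfies $\frac z2\,|e^{2\alpha a}s^2-e^{2\alpha b}t^2|\le z e^{2\alpha b}t(t-s)\le e^{2\alpha a}(t-s)^2$. Both are absorbed by $2(e^{2\alpha a}+e^{2\alpha b})(s-t)^2$.
\end{enumerate}

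\textbf{The final cutoff step.} Expanding $(\eta^m(x)-\eta^m(y))^2$ produces terms $e^{2\alpha u(x)}\eta(y)^{2m-2}|\eta(x)-\eta(y)|^2$ with $\eta(y)>\eta(x)$. H\"older against $\sum e^{(2\alpha+1)u}\eta^{2m}$ does not apply to these directly, and bounded degree does not fix the mismatch. Use $\eta(y)^{2m-2}\le C_m(\eta(x)^{2m-2}+R^{2-2m})$. Then bound the remainder with $e^{u(x)}\le\deg(x)\le C_0$, which follows from stability with $\varphi=\delta_x$. This costs an extra $O(R^{d-2m})$, harmless for $m$ large. It is exactly the role of \eqref{eq:power-cutoff} and $M=\|v\|_{\ell^\infty(V)}$ in the paper.

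With these two repairs your argument proves the theorem, in the same nonexistence form as the paper's proof.
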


The proof combines the stability estimate with cutoff functions adapted to the
polynomial volume-growth assumption. The parameters are chosen so that the
resulting exponent of the cutoff error is strictly larger than $d$; letting the
cutoff radius tend to infinity then contradicts positivity of $v$. The theorem
establishes nonexistence for growth order $d<10$. The existence or nonexistence of stable solutions when $d \geq 10$ is still open.

The paper is organized as follows. Section~2 collects the graph notation,
energy identity, stability condition, and volume-growth assumption.
Section~3 establishes the algebraic inequalities and the exponential change of
variables. Section~4 proves the key stability estimate, and Section~5 completes
the proof of Theorem~\ref{thm:main} using cutoff functions.

\section{Preliminaries}\label{sec:preliminaries}

Let $G=(V,E_G)$ be an infinite, locally finite, connected, simple, unweighted
graph. We write $x\sim y$ when vertices $x,y\in V$ are adjacent, and denote by
$B_R(x)$ the ball of radius $R$ centered at $x$ in the graph distance. Let
$C(V)$ be the space of real-valued functions on $V$, and let $C_c(V)$ denote
the subspace of finitely supported functions.

The graph Laplacian is defined by
\[
    \Delta f(x)=\sum_{y\sim x}\bigl(f(y)-f(x)\bigr),
    \qquad x\in V.
\]
For functions $f,g\in C(V)$, with at least one of them finitely supported, set
\begin{equation}\label{eq:energy-form}
    \Gamma(f,g)=\frac12\sum_{x}\sum_{y\sim x}
    (f(x)-f(y))(g(x)-g(y)).
\end{equation}
The discrete integration-by-parts formula states that
\begin{equation}\label{eq:discrete-ibp}
    \sum_{x\in V}(-\Delta f)(x)g(x)=\Gamma(f,g),
\end{equation}
whenever $g\in C_c(V)$. We also write
\[
    |\nabla f|^2(x)=\frac12\sum_{y\sim x}|f(y)-f(x)|^2.
\]
In particular, $\Gamma(f,f)=\sum_{x}|\nabla f|^2(x)$ for finitely supported
$f$.

A solution $u\colon V\to\mathbb{R}$ of \eqref{eq:liouville} is called stable if
\begin{equation}\label{eq:stability}
    \sum_{x\in V}e^{u(x)}\varphi(x)^2\leq \Gamma(\varphi,\varphi)
    \qquad\text{for every }\varphi\in C_c(V).
\end{equation}
All sums in \eqref{eq:stability} are finite for finitely supported test
functions.

\begin{definition}\label{def:volume-growth}
We say that $G$ has polynomial volume growth of order $d\geq 0$ if there is a
constant $C_0>0$ such that
\[
    |B_R(x)|\leq C_0R^d
    \qquad\text{for every }x\in V\text{ and }R\geq 1.
\]
\end{definition}

\section{Power inequalities and the exponential change of variables}
The following inequalities are the algebraic estimates used for discrete Lane--Emden equations; we include a proof for completeness.

\begin{lemma}\label{lem:algebra}
Let $a,b>0$ and $\gamma\geq1$. Then
\begin{align}
\bigl(a^{(\gamma+1)/2}-b^{(\gamma+1)/2}\bigr)^2
&\leq \frac{(\gamma+1)^2}{4\gamma}(a-b)(a^\gamma-b^\gamma), \label{eq:alg1}\\
(a-b)^2(a^{\gamma-1}+b^{\gamma-1})
&\leq 2(a-b)(a^\gamma-b^\gamma). \label{eq:alg2}
\end{align}
In both inequalities, the products on the right are nonnegative.
\end{lemma}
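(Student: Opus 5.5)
We will prove both inequalities by reducing them to one-variable estimates through integral representations. By symmetry we may assume $a>b>0$; if $a=b$ both sides vanish. The nonnegativity claim is immediate, since $t\mapsto t^\gamma$ is increasing for $\gamma\geq1$, so $a-b$ and $a^\gamma-b^\gamma$ have the same sign.

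For \eqref{eq:alg1} we use the fundamental theorem of calculus and Cauchy--Schwarz. Write
\[
a^{(\gamma+1)/2}-b^{(\gamma+1)/2}=\frac{\gamma+1}{2}\int_b^a t^{(\gamma-1)/2}\,dt .
\]
Cauchy--Schwarz on $[b,a]$ gives
\[
\Bigl(\int_b^a t^{(\gamma-1)/2}\,dt\Bigr)^2\leq (a-b)\int_b^a t^{\gamma-1}\,dt=(a-b)\,\frac{a^\gamma-b^\gamma}{\gamma}.
\]
Multiplying by $(\gamma+1)^2/4$ yields exactly \eqref{eq:alg1}, so this part needs no further estimates.

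For \eqref{eq:alg2} we divide by $(a-b)>0$. It then suffices to show
\[
(a-b)(a^{\gamma-1}+b^{\gamma-1})\leq 2(a^\gamma-b^\gamma).
\]
We set $t=b/a\in(0,1)$ and divide by $a^\gamma$, which reduces the claim to
\[
f(t):=2(1-t^\gamma)-(1-t)(1+t^{\gamma-1})\geq0 .
\]
Expanding gives $f(t)=1-t^\gamma-t^{\gamma-1}+t$, which factors as
\[
f(t)=(1+t)-t^{\gamma-1}(1+t)=(1+t)(1-t^{\gamma-1}).
\]
This is nonnegative for $t\in(0,1)$ and $\gamma\geq1$.

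There is essentially no obstacle here. The only step requiring care is finding the factorization in \eqref{eq:alg2}. If it is not spotted, an alternative is convexity: the trapezoid rule applied to the convex function $s\mapsto s^{\gamma-1}$ (convex for $\gamma\geq2$) bounds $\int_b^a s^{\gamma-1}ds$ by the endpoint average, but that inequality goes in the wrong direction and leaves the range $1\leq\gamma<2$ uncovered. The factorization avoids both problems, so we commit to it.
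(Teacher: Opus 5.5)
Your proof is correct and essentially matches the paper's. The Cauchy--Schwarz bound on $\int_b^a t^{(\gamma-1)/2}\,dt$ for \eqref{eq:alg1} is the same argument, and your factorization $f(t)=(1+t)(1-t^{\gamma-1})$ is the paper's identity $a^\gamma-b^\gamma-\frac{a-b}{2}(a^{\gamma-1}+b^{\gamma-1})=\frac{a+b}{2}(a^{\gamma-1}-b^{\gamma-1})$ multiplied by $2/a^\gamma$.
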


\begin{proof}
For \eqref{eq:alg1}, assume without loss of generality that $a\geq b$. By the Cauchy--Schwarz inequality,
\[
\begin{aligned}
\bigl(a^{(\gamma+1)/2}-b^{(\gamma+1)/2}\bigr)^2
&=\frac{(\gamma+1)^2}{4}
\left(\int_b^a t^{(\gamma-1)/2}\,dt\right)^2\\
&\leq \frac{(\gamma+1)^2}{4}(a-b)\int_b^a t^{\gamma-1}\,dt\\
&=\frac{(\gamma+1)^2}{4\gamma}(a-b)(a^\gamma-b^\gamma).
\end{aligned}
\]
For \eqref{eq:alg2}, again take $a\geq b$. Direct calculation gives
\[
a^\gamma-b^\gamma-\frac{a-b}{2}(a^{\gamma-1}+b^{\gamma-1})
=\frac{a+b}{2}(a^{\gamma-1}-b^{\gamma-1})\geq0.
\]
Multiplication by $2(a-b)$ gives \eqref{eq:alg2}.
\end{proof}
The following lemma allows us to construct a subsolution to a Lane–Emden equation from a solution of the Liouville equation.
\begin{lemma}\label{lem:transform}
Let $u$ solve \eqref{eq:liouville}, let $\alpha>0$, and set $v=e^{\alpha u}$. Then
\begin{equation}\label{eq:vsub}
-\Delta v\leq \alpha v^{1+1/\alpha}\quad\text{on }V.
\end{equation}
Moreover, stability of $u$ is equivalent to the inequality
\begin{equation}\label{eq:vstable}
\sum_{x\in V}v(x)^{1/\alpha}\varphi(x)^2\leq \Gamma(\varphi,\varphi)
\qquad(\forall\varphi\in C_c(V)).
\end{equation}
\end{lemma}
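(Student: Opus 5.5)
The plan is to handle the two assertions separately. Both are pointwise computations combined with the elementary convexity inequality $e^s\geq 1+s$. The stability equivalence is essentially a rewriting.

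For \eqref{eq:vsub}, fix $x\in V$. For each neighbour $y\sim x$, apply convexity of $t\mapsto e^{\alpha t}$ at the point $u(x)$:
\[
e^{\alpha u(y)}-e^{\alpha u(x)}\;\geq\;\alpha e^{\alpha u(x)}\bigl(u(y)-u(x)\bigr).
\]
This is just $e^{s}-1\geq s$ with $s=\alpha(u(y)-u(x))$, multiplied by $e^{\alpha u(x)}>0$. Summing over the finitely many $y\sim x$ (local finiteness) gives
\[
\Delta v(x)\geq \alpha e^{\alpha u(x)}\Delta u(x)=-\alpha e^{\alpha u(x)}e^{u(x)},
\]
where the equality uses the equation \eqref{eq:liouville}. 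Since $e^{\alpha u}e^{u}=e^{(\alpha+1)u}=v^{(\alpha+1)/\alpha}=v^{1+1/\alpha}$, this yields $-\Delta v\leq \alpha v^{1+1/\alpha}$ at every vertex.

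For \eqref{eq:vstable}, observe that $v>0$ and $v^{1/\alpha}=e^{u}$ pointwise. Hence the left-hand sides of \eqref{eq:stability} and \eqref{eq:vstable} coincide for every $\varphi\in C_c(V)$, while the right-hand sides are the same quantity $\Gamma(\varphi,\varphi)$. The two families of inequalities are therefore literally identical, which gives the equivalence in both directions.

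There is no real obstacle here. The only point requiring care is the direction of the convexity inequality: it must be tangent-line-below-graph at the base point $u(x)$, not at $u(y)$, so that the factor $e^{\alpha u(x)}$ factors out of the sum and produces exactly $\Delta u(x)$.
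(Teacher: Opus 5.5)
Your proof is correct and matches the paper's argument: the tangent-line inequality for $t\mapsto e^{\alpha t}$ at $u(x)$, summed over the neighbours of $x$ and combined with $\Delta u=-e^{u}$, gives the subsolution inequality. The stability equivalence is then just the pointwise identity $v^{1/\alpha}=e^{u}$, which makes the two families of inequalities literally the same.
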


\begin{proof}
Convexity of $t\mapsto e^{\alpha t}$ implies, for every pair $x\sim y$,
\[
e^{\alpha u(y)}\geq e^{\alpha u(x)}+\alpha e^{\alpha u(x)}(u(y)-u(x)).
\]
Summing over $y\sim x$ gives $\Delta v(x)\geq\alpha e^{\alpha u(x)}\Delta u(x)$. Since $\Delta u=-e^u$,
\[
-\Delta v(x)\leq\alpha e^{(\alpha+1)u(x)}=\alpha v(x)^{1+1/\alpha},
\]
which proves \eqref{eq:vsub}. The stability inequality \eqref{eq:vstable} follows from $e^u=v^{1/\alpha}$ and \eqref{eq:stability}.
\end{proof}

\section{A stability estimate}
For $\eta\in C_c(V)$, define
\[
B_{\gamma}(v,\eta):=\frac12\sum_{x\in V}\sum_{y\sim x}v(x)^{\gamma+1}(\eta(x)-\eta(y))^2
=\sum_{x\in V}v(x)^{\gamma+1}|\nabla\eta|^2(x).
\]
We next give a crucial estimate.
\begin{proposition}\label{prop:stability-est}
Assume $u$ is a stable solution of \eqref{eq:liouville} on a graph of uniformly bounded degree. Let $\gamma\geq1$ and $\alpha>0$ satisfy
\begin{equation}\label{eq:parameter}
\alpha\frac{(\gamma+1)^2}{4\gamma}<1.
\end{equation}
Then there exists $C_{\alpha,\gamma}>0$ such that, for every nonnegative $\eta\in C_c(V)$,
\begin{equation}\label{eq:stab-est}
\sum_{x\in V}v(x)^{\gamma+1+1/\alpha}\eta(x)^2
\leq C_{\alpha,\gamma} B_{\gamma}(v,\eta),
\end{equation}
where $v=e^{\alpha u}$. The constant is independent of $\eta$.
\end{proposition}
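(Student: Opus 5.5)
Our plan is to mimic the Lane--Emden argument, with the subsolution inequality \eqref{eq:vsub} playing the role of the equation and \eqref{eq:vstable} the role of stability. Throughout write $p=1+1/\alpha$, so that \eqref{eq:vsub} reads $-\Delta v\le \alpha v^{p}$ and $v^{\gamma+1+1/\alpha}=v^{\gamma+p}$.

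\emph{Step 1: test the subsolution inequality.} Multiply \eqref{eq:vsub} by the nonnegative, finitely supported function $v^\gamma\eta^2$ and use \eqref{eq:discrete-ibp}. This gives
\[
\Gamma(v,v^\gamma\eta^2)\le \alpha\sum_x v^{\gamma+p}\eta^2 .
\]
On each edge we expand $v^\gamma(x)\eta^2(x)-v^\gamma(y)\eta^2(y)$. Using the splitting $\eta^2(x)=\eta(x)\eta(y)+\eta(x)(\eta(x)-\eta(y))$, symmetrized in $x$ and $y$, we get
\[
\Gamma(v,v^\gamma\eta^2)=\tfrac12\sum_x\sum_{y\sim x}(v(x)-v(y))(v^\gamma(x)-v^\gamma(y))\,\eta(x)\eta(y)+R .
\]
The remainder $R$ is a sum of cross terms of the form $(v(x)-v(y))\,v^\gamma(x)\,\eta(x)(\eta(x)-\eta(y))$.

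\emph{Step 2: test stability.} Take $\varphi=v^{(\gamma+1)/2}\eta$ in \eqref{eq:vstable}. Its left side is exactly $\sum v^{\gamma+p}\eta^2$. For the right side, expand $\varphi(x)-\varphi(y)$ as
\[
\bigl(v^{(\gamma+1)/2}(x)-v^{(\gamma+1)/2}(y)\bigr)\sqrt{\eta(x)\eta(y)}
\]
plus terms involving $\eta(x)-\eta(y)$. An equivalent route is the discrete product identity for $(ab-cd)^2$. Applying Young's inequality with a parameter $\varepsilon$ and then \eqref{eq:alg1}, we obtain
\[
\sum v^{\gamma+p}\eta^2\le (1+\varepsilon)\frac{(\gamma+1)^2}{4\gamma}\cdot\tfrac12\sum_x\sum_{y\sim x}(v(x)-v(y))(v^\gamma(x)-v^\gamma(y))\eta(x)\eta(y)+C_\varepsilon E .
\]
Here $E$ collects terms like $\sum_x\sum_{y\sim x}(v^{\gamma+1}(x)+v^{\gamma+1}(y))(\eta(x)-\eta(y))^2$. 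Since $\eta$ enters only through $(\eta(x)-\eta(y))^2$, symmetry gives $E\le C\,B_\gamma(v,\eta)$.

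\emph{Step 3: combine.} Insert Step 1 into Step 2:
\[
\sum v^{\gamma+p}\eta^2\le(1+\varepsilon)\frac{(\gamma+1)^2}{4\gamma}\bigl(\alpha\textstyle\sum v^{\gamma+p}\eta^2+|R|\bigr)+C_\varepsilon B_\gamma .
\]
The remainder $R$ is handled by Young's inequality, splitting
\[
|v(x)-v(y)|\,v^\gamma(x)\,\eta(x)|\eta(x)-\eta(y)|\le\delta\,(v(x)-v(y))^2\,(v^{\gamma-1}(x)+v^{\gamma-1}(y))\,\eta(x)^2+C_\delta\,v^{\gamma+1}(x)(\eta(x)-\eta(y))^2 .
\]
The first piece is controlled by \eqref{eq:alg2} by $2\delta(v(x)-v(y))(v^\gamma(x)-v^\gamma(y))\eta(x)^2$. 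That quantity in turn comes back through Step 1, with $\eta(x)^2$ in place of $\eta(x)\eta(y)$, up to another $B_\gamma$ error. The bounded degree is used to compare $\eta(x)^2$-weighted and $\eta(x)\eta(y)$-weighted edge sums, and to bound sums over neighbours. Condition \eqref{eq:parameter} lets us choose $\varepsilon,\delta$ small so that the total coefficient of $\sum v^{\gamma+p}\eta^2$ on the right is strictly less than $1$. Absorbing that term gives \eqref{eq:stab-est}; all sums are finite because $\eta$ is finitely supported.

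\emph{Main obstacle.} The hard step is the bookkeeping in Steps 1--3, since on a graph $\eta(x)\neq\eta(y)$. The "good" edge form with weight $\eta(x)\eta(y)$ produced by stability must be matched with the one produced by testing the equation, with weight $\eta(x)^2$. The mismatch has to be absorbed using only \eqref{eq:alg2} and $v^{\gamma+1}(\eta(x)-\eta(y))^2$ terms, which requires care. A cleaner alternative I would try first is to work throughout with the edge weight $\eta(x)\eta(y)$. One writes $v^\gamma(x)\eta^2(x)-v^\gamma(y)\eta^2(y)$ by adding and subtracting $v^\gamma(x)\eta(x)\eta(y)$, which makes every error term have a factor $\eta(x)-\eta(y)$.
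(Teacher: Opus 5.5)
Your outline is correct and uses the same ingredients as the paper:
\begin{itemize}
\item test \eqref{eq:vsub} with $v^\gamma\eta^2$, and test stability with $v^{(\gamma+1)/2}\eta$;
\item apply \eqref{eq:alg1} on the stability side, and Young plus \eqref{eq:alg2} on the cross term;
\item absorb using \eqref{eq:parameter}.
\end{itemize}
The one real difference is your symmetric edge weight $\eta(x)\eta(y)$, and that choice is what creates your ``main obstacle.''

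The paper splits asymmetrically:
\[
\varphi(x)-\varphi(y)=\bigl(v^{(\gamma+1)/2}(x)-v^{(\gamma+1)/2}(y)\bigr)\eta(x)+v^{(\gamma+1)/2}(y)\bigl(\eta(x)-\eta(y)\bigr).
\]
Set $A_\eta=\sum_x v^{\gamma+1+1/\alpha}\eta^2$, $w_{xy}=(v(x)-v(y))(v^\gamma(x)-v^\gamma(y))\ge0$ and $T_\eta=\frac12\sum_x\sum_{y\sim x}w_{xy}\eta(x)^2$. Stability then gives $A_\eta\le(1+\delta)\frac{(\gamma+1)^2}{4\gamma}T_\eta+C_\delta B_\gamma(v,\eta)$. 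This $T_\eta$ is exactly the form in $\Gamma(v,v^\gamma\eta^2)=T_\eta+S_\eta$. Your Young plus \eqref{eq:alg2} estimate applied to $S_\eta$ gives $(1-\varepsilon)T_\eta\le\alpha A_\eta+C_\varepsilon B_\gamma(v,\eta)$ directly, so no weight comparison is ever needed.

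In your route, the step you leave vague (``comes back through Step 1 \dots\ up to another $B_\gamma$ error'') is true, but bounded degree is not what proves it. Write $\tilde T$ for your $\eta(x)\eta(y)$-weighted form. Symmetrization gives the exact identity
\[
T_\eta-\tilde T=\tfrac14\sum_x\sum_{y\sim x}w_{xy}(\eta(x)-\eta(y))^2 .
\]
Since $0\le w_{xy}\le v^{\gamma+1}(x)+v^{\gamma+1}(y)$, this gives $T_\eta\le\tilde T+B_\gamma(v,\eta)$. Hence $|R|\le4\delta T_\eta+C_\delta B_\gamma\le4\delta\tilde T+C'_\delta B_\gamma$, and your loop closes.

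There is an even shorter fix: $\tilde T\le T_\eta$ because $2\eta(x)\eta(y)\le\eta(x)^2+\eta(y)^2$. So your Step~2 already implies the paper's stability inequality, and you can drop the $\eta(x)\eta(y)$ layer entirely. That also removes the bound $\sqrt{\eta(x)}\,\bigl|\sqrt{\eta(x)}-\sqrt{\eta(y)}\bigr|\le|\eta(x)-\eta(y)|$, which your $\sqrt{\eta(x)\eta(y)}$ splitting tacitly needs.

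So the symmetric weight buys nothing here and costs one extra comparison. Note also that neither argument actually uses the bounded-degree hypothesis; every constant depends only on $\alpha$ and $\gamma$.
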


\begin{proof}
Abbreviate
\[
A_\eta=\sum_{x\in V} v(x)^{\gamma+1+\frac1\alpha}\eta(x)^2,
\qquad
T_\eta=\frac12\sum_{x\in V}\sum_{y\sim x}
(v(x)-v(y))(v(x)^\gamma-v(y)^\gamma)\eta(x)^2.
\]
Since $v>0$ and $t\mapsto t^\gamma$ is increasing, $T_\eta\geq0$.

First, use \eqref{eq:vstable} with $\varphi=v^{(\gamma+1)/2}\eta$. For every $\delta>0$, the elementary inequality $(A+B)^2\leq(1+\delta)A^2+C_\delta B^2$ applied edgewise gives
\begin{align*}
    A_\eta&\leq \frac{1}{2}\sum_{x\in V}\sum_{y\sim x}
\left(v(x)^{(\gamma+1)/2}\eta(x)-v(y)^{(\gamma+1)/2}\eta(y)\right)^2\\
&=\frac{1}{2}\sum_{x\in V}\sum_{y\sim x}\left((v(x)^{(\gamma+1)/2}-v(y)^{(\gamma+1)/2})\eta(x)+v(y)^{(\gamma+1)/2}(\eta(x)-\eta(y))\right)^2
    \\&\leq (1+\delta)\frac12\sum_{x\in V}\sum_{y\sim x}
\left(v(x)^{(\gamma+1)/2}-v(y)^{(\gamma+1)/2}\right)^2\eta(x)^2
+C_\delta B_\gamma(v,\eta).
\end{align*}
Using \eqref{eq:alg1}, we infer
\begin{equation}\label{eq:A-T}
A_\eta\leq (1+\delta)\frac{(\gamma+1)^2}{4\gamma} T_\eta+C_\delta B_\gamma(v,\eta).
\end{equation}

Next, test \eqref{eq:vsub} against the nonnegative finitely supported function $v^\gamma\eta^2$ and sum over graph,
\begin{equation}\label{eq:test-sub}
\Gamma(v,v^\gamma\eta^2)\leq \alpha A_\eta.
\end{equation}
Expanding the edge sum yields
\begin{align*}
    \Gamma(v,v^\gamma\eta^2) &=\frac{1}{2}\sum_{x\in V}\sum_{y\sim x}(v(y)-v(x))(v(y)^{\gamma}\eta(y)^2-v(x)^{\gamma}\eta(x)^2)\\
    &=\frac{1}{2}\sum_{x\in V}\sum_{y\sim x}(v(y)-v(x))(v(y)^{\gamma}-v(x)^{\gamma})\eta(x)^2\\
    &\quad+\frac{1}{2}\sum_{x\in V}\sum_{y\sim x}(v(y)-v(x))(\eta(y)^2-\eta(x)^2)v(y)^{\gamma}\\
    & = T_\eta+S_\eta,
\end{align*}
where
\begin{align*}
    S_\eta&=\frac12\sum_{x\in V}\sum_{y\sim x}
(v(x)-v(y))v(y)^\gamma\bigl(\eta(x)^2-\eta(y)^2\bigr)\\
&=\frac12\sum_{y\sim x}
(v(x)-v(y))(v(y)^\gamma+v(x)^\gamma)\bigl(\eta(x)^2-\eta(y)^2\bigr).
\end{align*}
We claim that for every $\varepsilon\in(0,1)$ there is a constant $C_{\varepsilon}$ such that
\begin{equation}\label{eq:cross}
|S_\eta|\leq\varepsilon T_\eta+C_{\varepsilon} B_\gamma(v,\eta).
\end{equation}
Indeed, edgewise Young's inequality and \eqref{eq:alg2} gives, for any small $\varepsilon>0$,
\[
\begin{aligned}
|S_\eta|&\leq\frac12\sum_{y\sim x}\vert v(x)-v(y)|(v(y)^\gamma+v(x)^\gamma)|\eta(x)^2-\eta(y)^2|\\
&\leq \frac12\sum_{y\sim x}\varepsilon (v(x)-v(y))^2(v(y)^{\gamma-1}+v(x)^{\gamma-1}))
          (\eta(x)^2+\eta(y)^2)
       +\frac12\sum_{y\sim x}C_\varepsilon (v(y)^{\gamma+1}+v(x)^{\gamma+1})(\eta(x)-\eta(y))^2\\
&\leq \sum_{y\sim x}\varepsilon (v(x)-v(y))(v(y)^{\gamma}-v(x)^{\gamma})
          (\eta(x)^2+\eta(y)^2)
       +\frac12\sum_{y\sim x}C_\varepsilon (v(y)^{\gamma+1}+v(x)^{\gamma+1})(\eta(x)-\eta(y))^2.\\
& = \varepsilon T_{\eta}+C_{\varepsilon}B(v,\eta)
\end{aligned}
\]

Combining \eqref{eq:test-sub} and \eqref{eq:cross} gives
\begin{equation}\label{eq:T-A}
(1-\varepsilon)T_\eta\leq \alpha A_\eta+C_{\varepsilon} B_\gamma(v,\eta).
\end{equation}
Substitute \eqref{eq:T-A} into \eqref{eq:A-T}:
\[
A_\eta\leq \frac{(1+\delta)\alpha}{1-\varepsilon}\frac{(\gamma+1)^2}{4\gamma}            A_\eta
+C_{\alpha,\gamma,\delta,\varepsilon}B_\gamma(v,\eta).
\]
Because $\alpha \frac{(\gamma+1)^2}{4\gamma}<1$ by \eqref{eq:parameter}, we may choose $\delta$ and $\varepsilon$ sufficiently small so that $\frac{(1+\delta)\alpha}{1-\varepsilon}\frac{(\gamma+1)^2}{4\gamma}<1$. Hence we have 
\[
\bigl(1-\frac{(1+\delta)\alpha}{1-\varepsilon}\frac{(\gamma+1)^2}{4\gamma}\bigr)A_{\eta}\leq C_{\alpha,\gamma,\delta,\varepsilon}B_\gamma(v,\eta).
\]
This proves \eqref{eq:stab-est}.
\end{proof}

\section{Cutoffs and proof of the main theorem}
In this section, we prove the main results Theorem \ref{thm:main}.
\begin{proof}[Proof of Theorem \ref{thm:main}]
Assume for contradiction that $u$ is a non-trivial stable solution. Let $o\in V$ be fixed. For every $R\geq1$, choose a cutoff $\psi_R\colon V\to[0,1]$ satisfying
\begin{equation*}
    \psi_R(x)=
    \begin{cases}
        1, & \text{if } d(x,o)\leq R,\\[5pt]
        \dfrac{2R-d(x,o)}{R}, & \text{if } R<d(x,o)<2R,\\[8pt]
        0, & \text{if } d(x,o)\geq 2R.
    \end{cases}
\end{equation*}
Then it satisfies
\[
\psi_R=1\text{ on }B_R(o),\qquad
\psi_R=0\text{ on }V\setminus B_{2R}(o),\qquad
|\psi_R(x)-\psi_R(y)|\leq R^{-1}\quad(x\sim y).
\]
The support of $|\nabla\psi_R|$ is contained in $B_{2R+1}(o)$, whose cardinality is bounded by $CR^d$; hence, by the polynomial volume growth assumption, for every $q\geq2$,
\begin{equation}\label{eq:cutoff-gradient}
\sum_{x\in V}|\nabla\psi_R|^q(x)\leq C_q R^{d-q}.
\end{equation}
Fix $\gamma\geq1$ and $\alpha>0$ satisfying \eqref{eq:parameter}; both will be chosen below. Put
\[
\tau=2\bigl(1+\alpha(\gamma+1)\bigr).
\]
Choose an integer $k$ large enough such that
\[
(2k-2)\frac{\tau}{\tau-2}\geq2k
\]
and 
\[
2k\geq\tau .
\]
Set 
\[
A_R=\sum_{x\in V} v(x)^{\gamma+1+\frac{1}{\alpha}}\psi_R(x)^{2k}.
\]
We estimate the right-hand side of \eqref{eq:stab-est}. For $a,b\in[0,1]$ and integer $k\geq1$, the elementary inequality
\begin{equation}\label{eq:power-cutoff}
|a^k-b^k|^2\leq C_k|a-b|^2a^{2k-2}+C_k|a-b|^{2k}
\end{equation}
holds. To see this, write $\max\{a,b\}\leq a+|a-b|$ and use
$|a^k-b^k|\leq k|a-b|\max\{a,b\}^{k-1}$. Also, taking $\varphi=\delta_x$ in \ref{eq:vstable}, we have $M=\|v\|_{\ell^\infty(V)}<\infty$. \eqref{eq:power-cutoff} imply
\begin{equation}\label{eq:B-cutoff}
B_\gamma(v,\psi_R^k)
\leq C_k\sum_{x\in V}v(x)^{\gamma+1}\psi_R(x)^{2k-2}|\nabla\psi_R|^2(x)
+C_kM^{\gamma+1}\sum_{x\sim y}|\psi_R(x)-\psi_R(y)|^{2k}.
\end{equation}

Since $(2k-2)\frac{\tau}{\tau-2}\geq2k$. Thus, for any $\varepsilon>0$, Young's inequality gives pointwise
\[
v(x)^{\gamma+1}\psi_R(x)^{2k-2}|\nabla\psi_R|^2(x)
\leq \varepsilon v(x)^{\gamma+1+\frac{1}{\alpha}}\psi_R(x)^{2k}
+C_\varepsilon |\nabla\psi_R|^{\tau}(x).
\]
After summation and use of \eqref{eq:cutoff-gradient},
\[
\sum_{x\in V}v(x)^{\gamma+1}\psi_R(x)^{2k-2}|\nabla\psi_R|^2(x)
\leq \varepsilon A_R+C_\varepsilon R^{d-\tau}.
\]
Also, the number of edges meeting $B_{2R+1}(o)$ is at most $CR^d$ by the polynomial volume growth. Since $|\psi_R(x)-\psi_R(y)|\leq R^{-1}$ and $2k\geq\tau$,
\[
\sum_{x\sim y}|\psi_R(x)-\psi_R(y)|^{2k}
\leq CR^{d-2k}\leq CR^{d-\tau}.
\]
Consequently, \eqref{eq:B-cutoff} gives
\[
B_\gamma(v,\psi_R^k)\leq C_k\varepsilon A_R+C_{\varepsilon,k}R^{d-\tau}.
\]
Applying Proposition \ref{prop:stability-est} we have
\[
   A_R
\leq C_{\alpha,\gamma} B_{\gamma}(v,\psi_R^k)\leq \varepsilon C_{\alpha,\gamma}C_k A_R+C_{\alpha,\gamma}C_{\varepsilon,k}R^{d-\tau}.
\]
Choosing $\varepsilon$ small enough such that $\varepsilon C_{\alpha,\gamma}C_k<1$ to absorb the $A_R$ term, we obtain
\begin{equation}\label{eq:final-est}
A_R\leq CR^{d-\tau},
\end{equation}
where $C$ is independent of $R$.

It remains to choose the parameters. Condition \eqref{eq:parameter} is
\[
0<\alpha<\frac{4\gamma}{(\gamma+1)^2}.
\]
Hence, $\tau=2\bigl(1+\alpha(\gamma+1)\bigr)<2\bigl(1+\frac{4\gamma}{\gamma+1}\bigr)$.
Since $2\bigl(1+\frac{4\gamma}{\gamma+1}\bigr)<10$, $\lim\limits_{\gamma\rightarrow+\infty}2\bigl(1+\frac{4\gamma}{\gamma+1}\bigr)=10$ and $d<10$, we can first choose $\gamma\geq1$ sufficiently large and then choose $\alpha$ satisfying
\[
0<\alpha<\frac{4\gamma}{(\gamma+1)^2},
\qquad d<2\bigl(1+\alpha(\gamma+1)\bigr)=\tau.
\]
With this choice, $d-\tau<0$. For any fixed $R_0>0$, since $\psi_R\equiv1$ on $B_R(o)$, estimate \eqref{eq:final-est} yields, for all $R\geq R_0$,
\[
0<\sum_{x\in B_{R_0}(o)}v(x)^{\gamma+1+\frac{1}{\alpha}}
\leq A_R\leq CR^{d-\tau}\longrightarrow0
\qquad\text{as }R\to\infty.
\]
This is a contradiction. Theorem \ref{thm:main} follows.
\end{proof}

\section*{Acknowledgements}
The author is deeply grateful to Professor Bobo Hua for helpful discussions and constant support.

\section*{Declaration on the Use of AI Tools}
The author used ChatGPT (GPT-6; OpenAI, available at https://chatgpt.com/) on [2026/10/15-1016/10/30] solely to suggest grammatical, stylistic, and clarity improvements to selected sentences originally written by the author. The mathematical ideas and main results of this paper were developed by the author independently. The author takes full responsibility for the accuracy, originality, and integrity of the final manuscript, including all mathematical statements, proofs, and references.


\end{document}